\def\NAT@def@citea{\def\@citea{\NAT@separator}}
\theoremstyle{plain}
\newtheorem{theorem}{Theorem}[section]
\newtheorem{lemma}[theorem]{Lemma}
\newtheorem{alg}{Algorithm}
\theoremstyle{definition}
\newtheorem{example}[theorem]{Example}
\theoremstyle{remark}
\newtheorem{remark}{Remark}
\begin{document}


\title{Further study on two fixed point iterative schemes for absolute value equations}

\author{
\name{Jiayu Liu\textsuperscript{a}\thanks{Email address: 1977078576@qq.com.} and Tingting Luo\textsuperscript{a}\thanks{Email address: 610592494@qq.com.} and Cairong Chen\textsuperscript{b}\thanks{Corresponding author. Email address: cairongchen@fjnu.edu.cn.}}
\affil{\textsuperscript{a}School of Mathematics and Statistics, Fujian Normal University, Fuzhou,
350117, P.R. China; \textsuperscript{b}School of Mathematics and Statistics \& Key Laboratory of Analytical Mathematics and Applications (Ministry of Education) \& Fujian Provincial Key Laboratory of Statistics and Artificial Intelligence, Fujian Normal University, Fuzhou, 350117, P.R. China}
}

\maketitle

\begin{abstract}
In this paper, we reconsider two new iterative methods for solving absolute value equations (AVE), which is proposed by Ali and Pan (Jpn. J. Ind. Appl. Math. 40: 303--314, 2023). Convergence results of the two iterative schemes and new sufficient conditions for the unique solvability of AVE are presented. In addition, for a special case, the optimal iteration parameters of the two algorithms are analyzed, respectively. Numerical results demonstrate our claims.
\end{abstract}

\begin{keywords}
Absolute value equations; unique solvability; iterative method; convergence; optimal iteration parameter
\end{keywords}

\section{Introduction}

We consider absolute value equations (AVE) of the type
\begin{equation}\label{eq:ave}
Ax - \vert x \vert = b,
\end{equation}
where~$A\in\mathbb{R}^{n\times n}$ and $b\in\mathbb{R}^n$ are known, and~$x\in\mathbb{R}^n$ is unknown. Here, $\vert x\vert=(\vert x_1\vert,\vert x_2\vert,\cdots,\vert x_n\vert)^\top$. Obviously, AVE~\eqref{eq:ave} is a special case of the following generalized absolute value equations (GAVE)
\begin{equation}\label{eq:gave}
Ax + B\vert x \vert = b,
\end{equation}
where~$B \in \mathbb{R}^{n\times n}$ is given. To our knowledge, GAVE~\eqref{eq:gave} was first introduced in \cite{rohn2004}. As shown in \cite{mang2007}, solving the general GAVE~\eqref{eq:gave} is NP-hard. Moreover, if solvable, checking whether GAVE~\eqref{eq:gave} has a unique solution or multiple solutions is NP-complete \cite{prok2009}.

The significance of the AVE~\eqref{eq:ave} mainly arises from its equivalence with the linear complementarity problem (LCP) \cite{mame2006,huhu2010}. In addition, AVE~\eqref{eq:ave} also appears in the characterization of solutions of interval systems of linear equations \cite{rohn1989} or in the characterization of the regularity of interval matrices \cite{rohn2009b}. Over the past two decades, AVE~\eqref{eq:ave} have received considerable attention in the field of numerical optimization and numerical algebra. There are already many results on both the theoretical side and the numerical side of AVE~\eqref{eq:ave}.

On the theoretical side, conditions for the existence,  nonexistence and uniqueness of solutions to AVE~\eqref{eq:ave} are reported; see, e.g.,  \cite{mame2006,rohn2009,wuli2020,hlmo2023,rohf2014} and references therein. Relationships between AVE~\eqref{eq:ave} and LCP are investigated in \cite{prok2009,huhu2010,mang2014,mame2006}. More properties of the solution to AVE~\eqref{eq:ave} can be found in \cite{hlad2023,hlad2018,kemo2012,zahl2023}, only a few are listed. On the numerical side, a lot of algorithms have been developed to solve AVE~\eqref{eq:ave}. For instance, the Newton-type iteration methods \cite{mang2009,tzcy2023,crfp2016,zhwe2009,edhs2017}, the Picard-like iteration methods \cite{salk2014,rohf2014}, the matrix splitting iteration methods \cite{soso2023,doss2020,kema2017,guwl2019,chyh2023,zhzl2023}, the concave minimization approaches \cite{mang2007b,zahl2021}, the dynamic models \cite{maer2018,cyyh2021,yzch2024} and so on; see, e.g.,  \cite{chyh2023b,alct2023} and references therein.

Recently, Ali and Pan~\cite{alpa2023} proposed two new iterative schemes for solving AVE~\eqref{eq:ave} with $A$ being an $M$-matrix\footnote{In this paper, we do not assume that $A$ is an $M$-matrix since it is not necessary in the theoretical analysis.}. Concretely, let the matrix $A$ be  split into
\begin{equation*}
A = N_A - M_A,
\end{equation*}
where
\begin{equation*}
N_A=D_A-U_A+U^\top_A\quad \text{and}\quad M_A=L_A+U^\top_A,
\end{equation*}
in which $D_A, -L_A, -U_A$ are the diagonal, strictly lower triangular and strictly upper triangular parts of~$A$, respectively. Then the authors in \cite{alpa2023} first introduced the following two iterative schemes (see Algorithm~\ref{alg:s1} and  Algorithm~\ref{alg:s2}) for solving AVE~\eqref{eq:ave}.

\begin{alg}\label{alg:s1}
Let $A\in \mathbb{R}^{n\times n}$ and $b\in \mathbb{R}^{n}$. Given an initial vector $x^{(0)}\in \mathbb{R}^{n}$, for $k=0,1,2,\cdots$ until the iteration sequence $\left\{x^{(k)}\right\}_{k=0}^\infty$ is convergent, compute
\begin{equation*}
x^{(k+1)}=x^{(k)} - \lambda E[-M_A x^{(k+1)} + N_A x^{(k)} - (|x^{(k)}|+b)],
\end{equation*}
where $\lambda>0$ is a given parameter and $E\in\mathbb{R}^{n\times n}$ is a given positive diagonal matrix.
\end{alg}

\begin{alg}\label{alg:s2}
Let $A\in \mathbb{R}^{n\times n}$ and $b\in \mathbb{R}^{n}$. Given an initial vector $x^{(0)}\in \mathbb{R}^{n}$, for $k=0,1,2,\cdots$ until the iteration sequence $\left\{x^{(k)}\right\}_{k=0}^\infty$ is convergent, compute
\begin{equation*}
x^{(k+1)} = x^{(k)} + D_A^{-1} M_A x^{(k+1)} - \lambda E\left[A x^{(k)} - |x^{(k)}|-b\right] - D_A^{-1} M_A x^{(k)},
\end{equation*}
where $\lambda>0$ is a given parameter and $E\in\mathbb{R}^{n\times n}$ is a given positive diagonal matrix.
\end{alg}

The authors in \cite{alpa2023} also analyzed the convergence of Algorithm~\ref{alg:s1} and Algorithm~\ref{alg:s2}. (See the following Theorem~\ref{thm:alg1} and Theorem~\ref{thm:alg2}).

\begin{theorem}\label{thm:alg1}
Let $A = N_A - M_A$ be non-singular, $\lambda > 0$ and $E\in \mathbb{R}^{n\times n}$ be a diagonal matrix with positive diagonal elements.  If
\begin{equation}\label{ie:condition1}
\rho(R^{-1}S)<1
\end{equation}
with $R = I - \lambda E |M_A|$ and $S = \lambda E + |I - \lambda EN_A|$, then for any initial point $x^{(0)}\in \mathbb{R}^n$, the sequence $\{x^{(k)}\}_{k=0}^\infty$ generated by Algorithm~\ref{alg:s1} converges to the unique solution of AVE~\eqref{eq:ave}.
\end{theorem}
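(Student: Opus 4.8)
The plan is to recast Algorithm~\ref{alg:s1} as a fixed-point iteration and then show that the error contracts componentwise under the nonnegative matrix $R^{-1}S$. First I would collect the $x^{(k+1)}$ terms to rewrite the scheme as
\begin{equation*}
(I-\lambda E M_A)x^{(k+1)} = (I-\lambda E N_A)x^{(k)} + \lambda E|x^{(k)}| + \lambda E b,
\end{equation*}
and observe that, by the definition of the splitting, $M_A = L_A + U_A^\top$ is strictly lower triangular; hence $R = I - \lambda E|M_A|$ is lower triangular with unit diagonal, so $R$ is nonsingular and, since $\lambda E|M_A|\ge 0$ is nilpotent, $R^{-1} = \sum_{j\ge 0}(\lambda E|M_A|)^j \ge 0$. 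In particular $I-\lambda E M_A$ is invertible, so the iteration is well defined.

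The key elementary estimate I would establish is that for every $y\in\mathbb{R}^n$,
\begin{equation*}
R|y| \le |(I-\lambda E M_A)y|.
\end{equation*}
This follows by writing $y = (I-\lambda E M_A)y + \lambda E M_A y$, taking absolute values componentwise, and using $|\lambda E M_A y| \le \lambda E|M_A||y|$ together with the positivity of $E$; rearranging gives $(I-\lambda E|M_A|)|y| \le |(I-\lambda E M_A)y|$. Multiplying by $R^{-1}\ge 0$ then yields $|y| \le R^{-1}|(I-\lambda E M_A)y|$.

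Next I would set up the error recursion. Any solution $x^*$ of AVE~\eqref{eq:ave} is a fixed point of the scheme, so subtracting its fixed-point identity and using $\bigl||x^{(k)}|-|x^*|\bigr| \le |x^{(k)}-x^*|$ gives
\begin{equation*}
|(I-\lambda E M_A)(x^{(k+1)}-x^*)| \le \bigl(|I-\lambda E N_A| + \lambda E\bigr)|x^{(k)}-x^*| = S|x^{(k)}-x^*|.
\end{equation*}
Combining this with the key estimate applied to $y = x^{(k+1)}-x^*$ produces the componentwise contraction $|x^{(k+1)}-x^*| \le R^{-1}S|x^{(k)}-x^*|$, whence $|x^{(k)}-x^*| \le (R^{-1}S)^k|x^{(0)}-x^*|$. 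Since $\rho(R^{-1}S)<1$ forces $(R^{-1}S)^k\to 0$, convergence of $\{x^{(k)}\}$ to $x^*$ follows.

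Finally, to justify that there is a unique solution to which the sequence converges, I would note that the same manipulations show the iteration map $F(x) = (I-\lambda E M_A)^{-1}[(I-\lambda E N_A)x + \lambda E|x| + \lambda E b]$ satisfies $|F(x)-F(x')| \le R^{-1}S|x-x'|$; because $R^{-1}S\ge 0$ and $\rho(R^{-1}S)<1$, there is a positive vector $v$ with $R^{-1}Sv\le\theta v$ for some $\theta<1$, so $F$ is a genuine contraction in the monotone weighted max-norm $\|u\|_v=\max_i|u_i|/v_i$, and Banach's theorem yields a unique fixed point, which is precisely the unique solution of AVE~\eqref{eq:ave}. The main obstacle I anticipate is the componentwise estimate together with the nonnegativity of $R^{-1}$: both hinge on the structural fact that $M_A$ is strictly lower triangular, and one must be careful that the spectral-radius hypothesis is genuinely a statement about the nonnegative matrix $R^{-1}S$ (so that it can be converted into an operator-norm contraction) rather than about an arbitrary matrix.
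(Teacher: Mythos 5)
Your argument is correct and complete. One thing worth noting up front: the paper itself gives no proof of Theorem~\ref{thm:alg1} --- it is quoted verbatim from \cite{alpa2023} --- so there is no in-paper proof to compare against; the paper's own proofs (Theorems~\ref{thm:1} and \ref{thm:2}, for Algorithms~\ref{alg:s3} and \ref{alg:s4}) use a different, norm-based technique, bounding $2$-norms of successive differences and reducing to a $2\times 2$ nonnegative recursion matrix $W$ whose spectral radius is controlled via Young's quadratic-root lemma. Your route is the componentwise one that the very form of $R=I-\lambda E|M_A|$ and $S=\lambda E+|I-\lambda EN_A|$ dictates, and all the steps check out: $M_A=L_A+U_A^\top$ is indeed strictly lower triangular, so $R$ is unit lower triangular with $R^{-1}=\sum_{j\ge 0}(\lambda E|M_A|)^j\ge 0$ (which also shows the statement is well posed, since nonsingularity of $R$ is not assumed in the theorem); the estimate $R|y|\le|(I-\lambda EM_A)y|$ and the resulting contraction $|x^{(k+1)}-x^*|\le R^{-1}S|x^{(k)}-x^*|$ are right; and the existence of a positive $v$ with $R^{-1}Sv\le\theta v$, $\theta<1$, is a standard fact for nonnegative matrices with spectral radius below one (e.g.\ take $v=\sum_{j\ge0}\theta^{-j}(R^{-1}S)^j e$ with $\rho(R^{-1}S)<\theta<1$ and $e$ the all-ones vector), so the Banach step delivering unique solvability of AVE~\eqref{eq:ave} is sound. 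If anything, the argument can be streamlined: the weighted-max-norm contraction alone already gives existence, uniqueness, and global convergence of the iteration in one stroke, making the first convergence paragraph redundant. The trade-off between the two styles is the one you would expect: the componentwise approach exploits the triangular structure of $M_A$ and yields the sharp condition $\rho(R^{-1}S)<1$ on a nonnegative matrix, whereas the paper's $2$-norm approach for Algorithms~\ref{alg:s3} and \ref{alg:s4} is structure-free but produces the cruder sufficient conditions \eqref{eq:condition1} and \eqref{eq:condition2}, which Remarks~\ref{rem:alg3} and \ref{rem:alg4} show are incomparable with the spectral-radius ones.
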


\begin{theorem}\label{thm:alg2}
Let $A = N_A - M_A$ be non-singular, $\lambda > 0$ and $E\in \mathbb{R}^{n\times n}$ be a diagonal matrix with positive diagonal elements.  If
\begin{equation}\label{ie:condition2}
\rho(G^{-1}J)<1
 \end{equation}
with $G = I - D_A^{-1} |M_A|$ and $J = I + \lambda E - |\lambda EA + D_A^{-1}M_A|$, then for any initial point $x^{(0)}\in \mathbb{R}^n$, the sequence $\{x^{(k)}\}_{k=0}^\infty$ generated by Algorithm~\ref{alg:s2} converges to the unique solution of AVE~\eqref{eq:ave}.
\end{theorem}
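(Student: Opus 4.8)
The plan is to recast Algorithm~\ref{alg:s2} as a one-step implicit fixed-point iteration and then track the error through a componentwise (nonnegative-matrix) inequality. First I would collect the implicit terms: moving $D_A^{-1}M_A x^{(k+1)}$ to the left and expanding $Ax^{(k)}-|x^{(k)}|-b$, the scheme becomes
\begin{equation*}
(I-D_A^{-1}M_A)x^{(k+1)} = (I-D_A^{-1}M_A-\lambda EA)x^{(k)} + \lambda E|x^{(k)}| + \lambda E b.
\end{equation*}
Since $M_A=L_A+U_A^\top$ is strictly lower triangular, $D_A^{-1}M_A$ is strictly lower triangular, so $G=I-D_A^{-1}|M_A|$ is lower triangular with unit diagonal; hence $G$ is nonsingular and, being $I$ minus a nonnegative nilpotent matrix (when $D_A>0$, so that $|D_A^{-1}M_A|=D_A^{-1}|M_A|$), it satisfies $G^{-1}=\sum_{j=0}^{n-1}(D_A^{-1}|M_A|)^j\ge 0$. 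A direct substitution using $Ax^\ast-|x^\ast|=b$ confirms that any solution $x^\ast$ of~\eqref{eq:ave} is a fixed point of the displayed recursion.

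Next I would subtract the fixed-point identity from the iteration to obtain, for the error $e^{(k)}:=x^{(k)}-x^\ast$,
\begin{equation*}
(I-D_A^{-1}M_A)e^{(k+1)} = (I-D_A^{-1}M_A-\lambda EA)e^{(k)} + \lambda E\big(|x^{(k)}|-|x^\ast|\big).
\end{equation*}
Taking componentwise absolute values, the reverse triangle inequality on the left (together with $|D_A^{-1}M_A\,e^{(k+1)}|\le D_A^{-1}|M_A|\,|e^{(k+1)}|$) gives $G|e^{(k+1)}|\le\big|(I-D_A^{-1}M_A)e^{(k+1)}\big|$, while on the right the triangle inequality together with the Lipschitz bound $\big||x^{(k)}|-|x^\ast|\big|\le|e^{(k)}|$ and $\lambda E\ge 0$ yields a bound of the form $H|e^{(k)}|$ for an explicit nonnegative matrix $H$. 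The aim of this step is to identify $H$ with $J=I+\lambda E-|\lambda EA+D_A^{-1}M_A|$, so that $G|e^{(k+1)}|\le J|e^{(k)}|$ and, after left-multiplying by $G^{-1}\ge 0$, $|e^{(k+1)}|\le G^{-1}J|e^{(k)}|$.

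With the recursion $|e^{(k+1)}|\le G^{-1}J|e^{(k)}|$ in hand, convergence is routine: provided $G^{-1}J\ge 0$, iterating gives $0\le|e^{(k)}|\le (G^{-1}J)^{k}|e^{(0)}|$, and $\rho(G^{-1}J)<1$ forces $(G^{-1}J)^{k}\to 0$, hence $x^{(k)}\to x^\ast$. Uniqueness follows from the same inequality: if $x^\ast,y^\ast$ both solve~\eqref{eq:ave}, the fixed-point relation gives $(I-G^{-1}J)|x^\ast-y^\ast|\le 0$, and since $\rho(G^{-1}J)<1$ with $G^{-1}J\ge 0$ the matrix $(I-G^{-1}J)^{-1}=\sum_{k\ge 0}(G^{-1}J)^{k}$ is nonnegative, forcing $x^\ast=y^\ast$. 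Existence can be secured in parallel, since $\rho(G^{-1}J)<1$ provides a weighted max-norm in which the iteration map is a contraction, so the Banach fixed-point theorem applies.

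The main obstacle I anticipate is the middle step, namely pinning down the right-hand bound matrix exactly as $J$. The straightforward estimate of $\big|(I-D_A^{-1}M_A-\lambda EA)e^{(k)}\big|$ produces $|I-D_A^{-1}M_A-\lambda EA|$, and hence $H=\lambda E+|I-D_A^{-1}M_A-\lambda EA|$, whose off-diagonal part equals $+|\lambda EA+D_A^{-1}M_A|$ rather than the $-|\lambda EA+D_A^{-1}M_A|$ appearing in $J$. Recovering the stated form therefore seems to require extra structure---for instance, writing $|x^{(k)}|-|x^\ast|=\widetilde D^{(k)}e^{(k)}$ with $|\widetilde D^{(k)}|\le I$ and exploiting the sign pattern of $D_A^{-1}M_A+\lambda EA$ (and possibly a smallness restriction on $\lambda E$) so that the diagonal contribution collapses to $I+\lambda E$. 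I would scrutinize this step most carefully, and verify along the way that $G^{-1}J\ge 0$, since that nonnegativity is precisely what makes both the componentwise iteration and the uniqueness argument go through.
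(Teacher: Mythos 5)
First, a framing point: the paper never proves this statement. Theorem~\ref{thm:alg2} is quoted from Ali and Pan \cite{alpa2023}; the present authors only establish their own, different conditions (Theorems~\ref{thm:1} and~\ref{thm:2}), which are norm-based and of exactly the ``$+\,\|I-\lambda EA-D_A^{-1}M_A\|$'' form that your natural estimate produces. So there is no in-paper argument to match yours against, and what matters is whether your route closes. It does not, and the obstacle you flag at the end is not a technicality to be engineered around but a genuine obstruction. Componentwise one always has $|I-C|\ge I-|C|$ (reverse triangle inequality on the diagonal, trivially off it), so with $C=\lambda EA+D_A^{-1}M_A$ the best your error recursion can deliver is $G|e^{(k+1)}|\le \widetilde{J}\,|e^{(k)}|$ with $\widetilde{J}=\lambda E+|I-C|\ \ge\ I+\lambda E-|C|=J$ entrywise. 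The inequality $G|e^{(k+1)}|\le J|e^{(k)}|$ that your plan requires is strictly stronger and is false in general: equality $|I-C|=I-|C|$ would force every off-diagonal entry of $C$ to vanish and the diagonal of $C$ to lie in $[0,1]$. Devices such as writing $|x^{(k)}|-|x^{*}|=\widetilde{D}^{(k)}e^{(k)}$ with $|\widetilde{D}^{(k)}|\le I$ do not help, because the problematic sign sits in the linear term $(I-C)e^{(k)}$, not in the absolute-value term.

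Second, even if one granted $|e^{(k+1)}|\le G^{-1}J|e^{(k)}|$, the remainder of your plan hinges on $G^{-1}J\ge 0$, which you rightly say you would verify---and it fails: the off-diagonal entries of $J$ equal $-|(\lambda EA+D_A^{-1}M_A)_{ij}|\le 0$, so $J$ (and hence generally $G^{-1}J$) is not nonnegative, and neither the chaining $|e^{(k)}|\le (G^{-1}J)^{k}|e^{(0)}|$ nor the uniqueness and weighted-max-norm contraction arguments go through. In short, your setup (implicit splitting, error recursion, nonnegative comparison matrix, plus the correct observations that $M_A$ is strictly lower triangular and $G$ is unit lower triangular with $G^{-1}\ge 0$ when $D_A>0$) is the right and standard one---it is essentially how the paper proves its own Theorems~\ref{thm:1} and~\ref{thm:2} in norm form---but this route provably cannot yield the condition $\rho(G^{-1}J)<1$ with the minus sign as stated. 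The comparison matrix it actually supports is $\widetilde{J}$, i.e.\ the condition $\rho\bigl(G^{-1}\widetilde{J}\bigr)<1$; whether the statement as imported from \cite{alpa2023} is salvageable by some other argument is left open by both your proposal and the paper itself.
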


However, in the numerical experiments of \cite{alpa2023}, Algorithm~\ref{alg:s1} and  Algorithm~\ref{alg:s2} did not be tested. As an alternative, with the idea of predictor-corrector, the authors  in \cite{alpa2023} executed Algorithm~\ref{alg:s1} and  Algorithm~\ref{alg:s2} by the following Algorithm~\ref{alg:s3} and  Algorithm~\ref{alg:s4}, respectively. Numerical results of \cite{alpa2023} demonstrate that Algorithm~\ref{alg:s3} and  Algorithm~\ref{alg:s4} perform well. By further investigation, we find that Algorithm~\ref{alg:s1} and Algorithm~\ref{alg:s3} are fundamentally different, and so is Algorithm~\ref{alg:s2} and Algorithm~\ref{alg:s4} (See Remark~\ref{rem:alg3} and Remark~\ref{rem:alg4} for the theoretical difference and Example~\ref{exam:1} for the numerical verification). This motivates us to study the convergence of Algorithm~\ref{alg:s3} and  Algorithm~\ref{alg:s4}. In addition, the discussion of the optimal iteration parameters of Algorithm~\ref{alg:s3} and  Algorithm~\ref{alg:s4} is also lack in  \cite{alpa2023}. Hence, the second goal of this paper is to discuss the optimal iteration parameters of Algorithm~\ref{alg:s3} and  Algorithm~\ref{alg:s4} to some extend.

\begin{alg}\label{alg:s3}
Let $A\in \mathbb{R}^{n\times n}$ be nonsingular and $b\in \mathbb{R}^{n}$. Given an initial vector $x^{(0)}\in \mathbb{R}^{n}$, for $k=0,1,2,\cdots$ until the iteration sequence $\left\{x^{(k)}\right\}_{k=0}^\infty$ is convergent, compute
\begin{eqnarray}\label{eq:alg1}
\begin{cases}
y^{(k+1)}=A^{-1}(|x^{(k)}|+b),\\
x^{(k+1)}=x^{(k)}-\lambda E[-M_A y^{(k+1)}+N_A x^{(k)}-(|x^{(k)}|+b)],
\end{cases}
\end{eqnarray}
where $\lambda>0$ is a given parameter and $E\in\mathbb{R}^{n\times n}$ is a given diagonal matrix with positive diagonal components.
\end{alg}

\begin{alg}\label{alg:s4}
Let $A\in \mathbb{R}^{n\times n}$ be nonsingular and $b\in \mathbb{R}^{n}$. Given an initial vector $x^{(0)}\in \mathbb{R}^{n}$, for $k=0,1,2,\cdots$ until the iteration sequence $\left\{x^{(k)}\right\}_{k=0}^\infty$ is convergent, compute
\begin{eqnarray}\label{eq:alg2}
\begin{cases}
y^{(k+1)} = A^{-1}(|x^{(k)}|+b),\\
x^{(k+1)}=x^{(k)} + D_A^{-1} M_A y^{(k+1)} - \lambda E\left[A x^{(k)}-|x^{(k)}|-b\right] -D_A^{-1} M_A x^{(k)},
\end{cases}
\end{eqnarray}
where $\lambda>0$ is a given parameter and $E\in\mathbb{R}^{n\times n}$ is a given diagonal matrix with positive diagonal components.
\end{alg}

The rest of this paper is organized as follows. In Section~\ref{sec:Preliminaries}, we present some lemmas which are useful for our later developments. In Section~\ref{sec:Convergence}, we present the convergence results for Algorithm~\ref{alg:s3} and  Algorithm~\ref{alg:s4}. In Section~\ref{sec:parameter}, we discuss the optimal iteration parameters of Algorithm~\ref{alg:s3} and  Algorithm~\ref{alg:s4}. Our claims are verified by numerical results in Section~\ref{sec:Numerical}. Finally, some concluding remarks are given in Section~\ref{sec:Conclusions}.

\textbf{Notation.} Let~$\mathbb{R}^{n\times n}$ be the set of all~$n\times n$ real matrices and~$\mathbb{R}^n=\mathbb{R}^{n\times 1}$.~$|U|\in\mathbb{R}^{m\times n}$ denote the componentwise absolute value of matrix~$U$.~$I$ denotes the identity
matrix with suitable dimensions.~$\|U\|$ denotes the 2-norm of~$U\in\mathbb{R}^{m\times n}$ which is defined by the formula~$\|U\|=\max\{\|Ux\|:x\in\mathbb{R}^n,\|x\|=1\}$, where~$\|x\|$ is the 2-norm of the vector~$x$.~$\rho(U)$ denotes the spectral radius of~$U$. For $A \in \mathbb{R}^{n\times n}$, $\det (A)$ denotes its determinant.

\section{Preliminaries}\label{sec:Preliminaries}
In this section, we collect some basic results that will be used later.

\begin{lemma}(\cite[Lemma 2.1.]{youn1971})\label{lem:2.1}
Let $p$ and $q$ be real, then both roots of the quadratic equation $x^2-px+q=0$ are less than one in modulus if and only if $|q|<1$ and $|p|<1+q$.
\end{lemma}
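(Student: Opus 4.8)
The plan is to reduce the statement to evaluating the quadratic $f(x)=x^2-px+q$ at $x=\pm1$ and combining this with Vieta's formulas, treating separately the case of two real roots and the case of a complex-conjugate pair. Writing $r_1,r_2$ for the roots, Vieta gives $p=r_1+r_2$ and $q=r_1r_2$, while $f(1)=1-p+q=(1-r_1)(1-r_2)$ and $f(-1)=1+p+q=(1+r_1)(1+r_2)$. The first step is the purely algebraic equivalence that $|p|<1+q$ holds if and only if both $f(1)>0$ and $f(-1)>0$, since $f(1)>0$ reads $p<1+q$ and $f(-1)>0$ reads $-p<1+q$. Thus the target conditions $|q|<1$ and $|p|<1+q$ are equivalent to the three conditions $|q|<1$, $f(1)>0$ and $f(-1)>0$, and I would prove the lemma in this reformulated form.

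For necessity I would assume $|r_1|<1$ and $|r_2|<1$. Then $|q|=|r_1|\,|r_2|<1$ immediately. If the roots are real they lie in $(-1,1)$, so $1-r_i>0$ and $1+r_i>0$ for $i=1,2$, whence $f(1)>0$ and $f(-1)>0$; if instead they form a conjugate pair, then $f(1)=|1-r_1|^2>0$ and $f(-1)=|1+r_1|^2>0$ because $|r_1|<1$ forces $r_1\neq\pm1$. Either way $|p|<1+q$ follows from the reformulation above.

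For sufficiency I would assume $|q|<1$, $f(1)>0$ and $f(-1)>0$. The complex-conjugate case is easy: there $q=r_1r_2=|r_1|^2$, so $|q|<1$ gives $|r_1|^2<1$, i.e.\ both roots have modulus below one. The main obstacle is the real-root case, where $f(1)>0$ and $f(-1)>0$ alone do not locate the roots: they only say that neither $1$ nor $-1$ lies in the closed interval $[r_1,r_2]$ (with $r_1\le r_2$) on which the upward-opening parabola $f$ is nonpositive. Since this interval is connected and avoids both $\pm1$, it must lie entirely in one of $(-\infty,-1)$, $(-1,1)$, or $(1,\infty)$. The first and third options force $r_1r_2>1$, i.e.\ $|q|>1$, contradicting $|q|<1$; hence $[r_1,r_2]\subset(-1,1)$ and both roots have modulus below one. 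This is precisely the point at which the product condition $|q|<1$ is indispensable, and it is the step I would treat most carefully, as the endpoint evaluations $f(\pm1)>0$ by themselves cannot exclude both roots sitting outside the unit interval.
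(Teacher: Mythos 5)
Your proof is correct. Note, however, that the paper does not prove this lemma at all: it is quoted verbatim from Young's \emph{Iterative Solution of Large Linear Systems} (Lemma 2.1 there) and used as a black box, so there is no in-paper argument to compare against. Your argument is the standard one and is complete: the reformulation of $|p|<1+q$ as $f(1)>0$ and $f(-1)>0$ via $f(\pm 1)=(1\mp r_1)(1\mp r_2)$ is right, the complex-conjugate case is handled cleanly through $q=|r_1|^2$, and you correctly identify and close the only delicate point in the real case, namely that $f(\pm 1)>0$ alone would permit both roots to lie on the same side outside $[-1,1]$, a configuration excluded precisely by $|q|<1$.
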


\begin{lemma}(e.g., \cite[Theorem~1.10]{saad2003})\label{lem:2.4}
For~$U\in\mathbb{R}^{n\times n}$,~$\lim\limits_{k\rightarrow+\infty} U^k=0$ if and only if~$\rho(U)<1$.
\end{lemma}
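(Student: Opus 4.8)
The plan is to prove the two implications of the equivalence separately. I would dispatch the necessity direction first, since it is elementary, and then concentrate on the sufficiency direction, where the non-diagonalizable case is the genuine difficulty.

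For necessity, I would assume $\lim_{k\to\infty}U^k=0$ and work over $\mathbb{C}$. Let $\lambda$ be any eigenvalue of $U$ with eigenvector $v\neq 0$, so that $U^k v=\lambda^k v$ for all $k$. Since $U^k\to 0$ forces $U^k v\to 0$, I obtain $|\lambda|^k\|v\|\to 0$, and as $\|v\|>0$ this gives $|\lambda|<1$. Maximizing over all eigenvalues then yields $\rho(U)<1$.

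For sufficiency, I would assume $\rho(U)<1$ and pass to the Jordan canonical form $U=PJP^{-1}$, so that $U^k=PJ^kP^{-1}$ and it suffices to show $J^k\to 0$. Because $J$ is block diagonal, I would reduce to a single Jordan block attached to an eigenvalue $\lambda$ with $|\lambda|\le\rho(U)<1$; the entries of the $k$-th power of such a block are of the form $\binom{k}{j}\lambda^{k-j}$ with $j$ fixed, each of which tends to $0$ because the geometric factor $|\lambda|^{k}$ dominates the polynomial factor $\binom{k}{j}$. This gives $J^k\to 0$ and hence $U^k\to 0$.

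An alternative for the sufficiency direction, which avoids the explicit block computation, is to invoke the standard fact that for every $\varepsilon>0$ there is an induced matrix norm $\|\cdot\|_\ast$ with $\|U\|_\ast\le\rho(U)+\varepsilon$; choosing $\varepsilon$ so that $\rho(U)+\varepsilon<1$ gives $\|U^k\|_\ast\le(\rho(U)+\varepsilon)^k\to 0$, and the conclusion follows from the equivalence of all norms on $\mathbb{R}^{n\times n}$. In either route the hard part is the same: handling the non-normal case, where $\|U^k\|$ need not decrease monotonically and the crude bound $\|U^k\|\le\|U\|^k$ may exceed $1$. The Jordan-block estimate and the approximating norm are precisely the devices that overcome this obstacle.
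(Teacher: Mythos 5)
Your proof is correct: the eigenvector argument gives necessity, and either the Jordan-form estimate or the $\varepsilon$-approximating induced norm gives sufficiency. Note that the paper itself offers no proof of this lemma --- it is quoted verbatim from \cite[Theorem~1.10]{saad2003} as a known preliminary --- and the argument in that source is essentially the Jordan canonical form route you describe, so your write-up matches the standard treatment.
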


\begin{lemma}(e.g., \cite[Theorem~1.11]{saad2003})\label{lem:2.3}
For~$U\in\mathbb{R}^{n\times n}$, the series~$\sum\limits_{k=0}^\infty U^k$ converges if and only if~$\rho(U)<1$ and we have~$\sum\limits_{k=0}^\infty U^k=(I-U)^{-1}$ whenever it converges.
\end{lemma}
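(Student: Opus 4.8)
The plan is to prove both implications by reducing everything to the behaviour of the powers $U^k$, which is already fully controlled by Lemma~\ref{lem:2.4}. The unifying observation is that both convergence of the series and the closed form $(I-U)^{-1}$ can be extracted from a single telescoping identity once the powers are known to tend to zero.

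For the necessity direction, I would suppose the series $\sum_{k=0}^\infty U^k$ converges. Then its partial sums $S_m=\sum_{k=0}^m U^k$ form a convergent, hence Cauchy, sequence in $\mathbb{R}^{n\times n}$; since $S_m-S_{m-1}=U^m$, the general term must satisfy $U^m\to 0$ as $m\to\infty$. By Lemma~\ref{lem:2.4}, this forces $\rho(U)<1$.

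For the sufficiency direction together with the explicit formula, I would start from $\rho(U)<1$. First I note that $1$ cannot be an eigenvalue of $U$, so $\det(I-U)\neq 0$ and $I-U$ is invertible. Then I exploit the telescoping identity $(I-U)S_m=(I-U)\sum_{k=0}^m U^k=I-U^{m+1}$, which gives $S_m=(I-U)^{-1}(I-U^{m+1})$. Since $\rho(U)<1$, Lemma~\ref{lem:2.4} yields $U^{m+1}\to 0$, so letting $m\to\infty$ shows $S_m\to (I-U)^{-1}$. This simultaneously establishes the convergence of the series and the identity $\sum_{k=0}^\infty U^k=(I-U)^{-1}$.

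I do not anticipate any serious obstacle, as the argument is entirely elementary once Lemma~\ref{lem:2.4} is available. The only point requiring a little care is the invertibility of $I-U$, which must be secured before one can even write $(I-U)^{-1}$; this follows immediately from $\rho(U)<1$, since every eigenvalue of $U$ then has modulus strictly less than one and hence differs from $1$.
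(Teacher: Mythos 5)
Your proof is correct and is the standard Neumann-series argument: necessity via $U^m=S_m-S_{m-1}\to 0$ combined with Lemma~\ref{lem:2.4}, and sufficiency via the telescoping identity $(I-U)S_m=I-U^{m+1}$ after noting that $\rho(U)<1$ rules out $1$ as an eigenvalue of $U$. The paper itself gives no proof of this lemma, citing it as \cite[Theorem~1.11]{saad2003}, and your argument coincides with the classical one found there, so there is nothing further to compare.
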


\section{Convergence analysises of Algorithm~\ref{alg:s3} and Algorithm~\ref{alg:s4}}\label{sec:Convergence}
In this section, we will analysis the convergence of Algorithm~\ref{alg:s3} and Algorithm~\ref{alg:s4}. For  Algorithm~\ref{alg:s3}, we have the following convergence theorem.

\begin{theorem}\label{thm:1}
Let $A=N_A - M_A$ with $A$ and $N_A$ being nonsingular. If there exist $\lambda >0$ and a positive diagonal matrix $E$ such that
\begin{equation}\label{eq:condition1}
\lambda\|E\|+\|I-\lambda EN_A\|+\lambda \|A^{-1}\|\|EM_A\|<1,
\end{equation}
then the AVE~\eqref{eq:ave} has a unique solution~$x^{*}$ for any $b\in \mathbb{R}^n$ and the sequence~$\{(x^{(k)},y^{(k)})\}^\infty_{k=0}$ generated by~\eqref{eq:alg1} converges to~$(x^{*}, y^{*}=A^{-1}\left(\vert x^{*}\vert+b)\right)$.
\end{theorem}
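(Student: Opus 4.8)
The plan is to eliminate the predictor variable $y$ and recast~\eqref{eq:alg1} as a single fixed-point iteration in $x$, and then invoke a contraction argument. Substituting the first line of~\eqref{eq:alg1} into the second yields
\begin{equation*}
x^{(k+1)} = (I-\lambda E N_A)x^{(k)} + \lambda E(M_A A^{-1}+I)(|x^{(k)}|+b) =: F(x^{(k)}),
\end{equation*}
so the whole scheme reduces to $x^{(k+1)}=F(x^{(k)})$ together with the auxiliary update $y^{(k+1)}=A^{-1}(|x^{(k)}|+b)$. It therefore suffices to analyze the map $F\colon\mathbb{R}^n\to\mathbb{R}^n$.

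First I would show that $F$ is a contraction in the $2$-norm. For any $x,\tilde x$,
\begin{equation*}
F(x)-F(\tilde x) = (I-\lambda E N_A)(x-\tilde x) + \lambda E(M_A A^{-1}+I)(|x|-|\tilde x|).
\end{equation*}
Using the elementary bound $\||x|-|\tilde x|\|\le\|x-\tilde x\|$, submultiplicativity of the norm, and $\|E M_A A^{-1}\|\le\|A^{-1}\|\,\|E M_A\|$, I obtain
\begin{equation*}
\|F(x)-F(\tilde x)\| \le \left(\|I-\lambda E N_A\| + \lambda\|A^{-1}\|\,\|E M_A\| + \lambda\|E\|\right)\|x-\tilde x\|,
\end{equation*}
whose coefficient is exactly the left-hand side of~\eqref{eq:condition1} and hence strictly less than $1$. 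By the Banach fixed-point theorem $F$ has a unique fixed point $x^{*}$ and $x^{(k)}\to x^{*}$ for every starting point; alternatively, the geometric estimate $\|x^{(k+1)}-x^{(k)}\|\le c^{k}\|x^{(1)}-x^{(0)}\|$ with $c<1$ shows $\{x^{(k)}\}$ is Cauchy, so it converges by completeness of $\mathbb{R}^n$.

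The crux of the argument is to identify the fixed points of $F$ with the solutions of AVE~\eqref{eq:ave}, and this is precisely where nonsingularity of $N_A$ is used (the contraction step only invoked $A^{-1}$). A point $x^{*}$ satisfies $x^{*}=F(x^{*})$ if and only if, after cancelling the invertible factor $\lambda E$,
\begin{equation*}
N_A x^{*} = (M_A A^{-1}+I)(|x^{*}|+b).
\end{equation*}
Writing $e := A x^{*}-(|x^{*}|+b)$ and using $N_A=A+M_A$, this rearranges to $e=-M_A A^{-1}e$, i.e. $(I+M_A A^{-1})e = N_A A^{-1}e = 0$. Since $N_A$ and $A$ are nonsingular, $N_A A^{-1}$ is nonsingular, so $e=0$, meaning $A x^{*}-|x^{*}|=b$; conversely any solution of AVE is readily checked to be a fixed point of $F$. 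Hence AVE~\eqref{eq:ave} has exactly one solution $x^{*}$ for every $b$.

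Finally, convergence of the auxiliary sequence follows by continuity: since $x\mapsto A^{-1}(|x|+b)$ is continuous and $x^{(k)}\to x^{*}$, we get $y^{(k+1)}=A^{-1}(|x^{(k)}|+b)\to A^{-1}(|x^{*}|+b)=y^{*}$, so $\{(x^{(k)},y^{(k)})\}\to(x^{*},y^{*})$. I expect the only delicate point to be the fixed-point/solution equivalence in the third step; the contraction estimate and the passage to the limit for $y$ are routine.
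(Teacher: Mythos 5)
Your proof is correct, but it takes a genuinely different route from the paper. You eliminate the predictor $y^{(k+1)}$ by substitution, reduce the scheme to a single fixed-point map $F(x)=(I-\lambda EN_A)x+\lambda E(M_AA^{-1}+I)(|x|+b)$, show $F$ is a Banach contraction whose Lipschitz constant is exactly the left-hand side of~\eqref{eq:condition1}, and then identify fixed points of $F$ with solutions of AVE~\eqref{eq:ave} via the factorization $I+M_AA^{-1}=N_AA^{-1}$ (this is where nonsingularity of $N_A$ enters, correctly). The paper instead keeps the coupled pair $(y^{(k)},x^{(k)})$, derives a componentwise inequality governed by a $2\times 2$ nonnegative comparison matrix $W$, invokes its Lemmas on spectral radius and Neumann series to show the sequences are Cauchy, verifies the limit solves the AVE, and then proves uniqueness by a separate contradiction argument; the condition~\eqref{eq:condition1} emerges as the requirement $\rho(W)<1$ via the quadratic-root Lemma~\ref{lem:2.1}. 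The two approaches yield the same threshold because $W$ has a zero first column, so $\rho(W)$ equals its $(2,2)$ entry, which is precisely your contraction constant. Your argument is more economical: Banach's theorem delivers existence, uniqueness and convergence in one stroke, and the equivalence of fixed points with AVE solutions replaces the paper's separate uniqueness argument. The paper's comparison-matrix technique is heavier here but generalizes more readily to schemes where the auxiliary variable cannot be eliminated (e.g., when the first substep is itself implicit). The only point worth spelling out in your write-up is the converse direction of the fixed-point/solution equivalence (that every AVE solution is a fixed point of $F$), since that is what makes the AVE solution unique rather than merely the fixed point; it is a one-line computation using $N_A-M_A=A$.
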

\begin{proof}
It follows from~\eqref{eq:alg1} that
\begin{eqnarray}\label{eq:meth1'}
\begin{cases}
y^{(k)}=A^{-1}(|x^{(k-1)}|+b),\\
x^{(k)}=x^{(k-1)}-\lambda E[-M_A y^{(k)}+N_A x^{(k-1)}-(|x^{(k-1)}|+b)].
\end{cases}
\end{eqnarray}
Subtracting~\eqref{eq:meth1'} from~\eqref{eq:alg1}, we have
\begin{eqnarray*}
\begin{cases}
y^{(k+1)}-y^{(k)}=A^{-1}(|x^{(k)}|-|x^{(k-1)}|),\\
x^{(k+1)}-x^{(k)}=(I-\lambda EN_A)(x^{(k)}-x^{(k-1)})+\lambda EM_A (y^{(k+1)}-y^{(k)})+\lambda E(|x^{(k)}|-|x^{(k-1)}|).
\end{cases}
\end{eqnarray*}
By considering the 2-norm on both sides of the above equations and applying the inequality $\||x| - |y|\| \le \|x - y\|$, we get
\begin{eqnarray*}
\begin{cases}
\|y^{(k+1)}-y^{(k)}\|\leq \|A^{-1}\|\|x^{(k)}-x^{(k-1)}\|,\\
\|x^{(k+1)}-x^{(k)}\|\leq (\|I-\lambda EN_A\|+\|\lambda E\|)\|x^{(k)}-x^{(k-1)}\|+\|\lambda EM_A\|\|y^{(k+1)}-y^{(k)}\|,
\end{cases}
\end{eqnarray*}
from which we obtain
\begin{eqnarray}\label{eq:1*}\footnotesize
\left[\begin{array}{cc}
                1 & 0\\
-\|\lambda EM_A\| & 1
\end{array}\right]
\left[\begin{array}{c}
 \|y^{(k+1)}-y^{(k)}\| \\
 \|x^{(k+1)}-x^{(k)}\|
\end{array}\right]
\leq
\left[\begin{array}{cc}
0 & \|A^{-1}\|                      \\
0 & \|\lambda E\|+\|I-\lambda EN_A\|
\end{array}\right]
\left[\begin{array}{c}
 \|y^{(k)}-y^{(k-1)}\| \\
 \|x^{(k)}-x^{(k-1)}\|
\end{array}\right].
\end{eqnarray}
Multiplying both sides of~\eqref{eq:1*} from the left by the nonnegative matrix
$P=\left[\begin{array}{cc}
1 & 0\\
\|\lambda EM_A\| & 1
\end{array}\right],$ we have
\begin{eqnarray}\label{eq:W}
\left[\begin{array}{c}
 \|y^{(k+1)}-y^{(k)}\| \\
 \|x^{(k+1)}-x^{(k)}\|
\end{array}\right]
\leq W
\left[\begin{array}{c}
 \|y^{(k)}-y^{(k-1)}\| \\
 \|x^{(k)}-x^{(k-1)}\|
\end{array}\right],
\end{eqnarray}
where
\begin{eqnarray*}
W=
\left[\begin{array}{cc}
0 & \|A^{-1}\|                      \\
0 & \|\lambda E\|+\|I-\lambda EN_A\|+\|A^{-1}\|\|\lambda EM_A\|
\end{array}\right].
\end{eqnarray*}
For each $m \geq 1$, if $\rho(W)<1$, it follows from~\eqref{eq:W}, Lemma~\ref{lem:2.3} and Lemma~\ref{lem:2.4} that
\begin{align*}
\left[\begin{array}{c}
 \|y^{(k+m)}-y^{(k)}\| \\
 \|x^{(k+m)}-x^{(k)}\|
\end{array}\right]&=
\left[\begin{array}{c}
 \|\sum_{j=0}^{m-1}(y^{(k+j+1)}-y^{(k+j)})\| \\
 \|\sum_{j=0}^{m-1}(x^{(k+j+1)}-x^{(k+j)})\|
\end{array}\right]
\leq
\sum_{j=0}^{m-1}
\left[\begin{array}{c}
 \|y^{(k+j+1)}-y^{(k+j)}\| \\
 \|x^{(k+j+1)}-x^{(k+j)}\|
\end{array}\right]\nonumber\\
&\leq \sum_{j=0}^{\infty}W^{j+1}
\left[\begin{array}{c}
 \|y^{(k)}-y^{(k-1)}\| \\
 \|x^{(k)}-x^{(k-1)}\|
\end{array}\right]
=(I-W)^{-1}W
\left[\begin{array}{c}
 \|y^{(k)}-y^{(k-1)}\| \\
 \|x^{(k)}-x^{(k-1)}\|
\end{array}\right]\nonumber\\
&\leq (I-W)^{-1}W^k
\left[\begin{array}{c}
 \|y^{(1)}-y^{(0)}\| \\
 \|x^{(1)}-x^{(0)}\|
\end{array}\right]
\rightarrow
\left[\begin{array}{c}
0\\
0
\end{array}\right]~~(\text{as}\quad k\rightarrow \infty).
\end{align*}
Therefore,~$\{y^{(k)}\}_{k=0}^{\infty}$ and~$\{x^{(k)}\}_{k=0}^{\infty}$ are Cauchy sequences and hence they are convergent in $\mathbb{R}^n$. Let $\lim\limits_{k\rightarrow\infty} y^{(k)} =y^{*}$  and $\lim\limits_{k\rightarrow\infty} x^{(k)} =x^{*}$. Then it follows from~\eqref{eq:alg1} that
\begin{equation*}
\begin{cases}
y^{*}=A^{-1}(|x^{*}|+b),\\
x^{*}=x^{*}-\lambda E[-M_A y^{*}+N_A x^{*}-(|x^{*}|+b)],
\end{cases}
\end{equation*}
from which and the nonsingularity of $N_A$  we have
\begin{eqnarray*}
\begin{cases}
Ax^{*}-|x^{*}|-b=0,\\
x^{*}=y^{*}.
\end{cases}
\end{eqnarray*}
In other words, $x^{*}$ is a solution to AVE~\eqref{eq:ave}.

Next, we will prove that $\rho(W)<1$ if~\eqref{eq:condition1} holds.
Suppose that~$\tau$ is an eigenvalue of~$W$, then
\begin{eqnarray*}
\det (\tau I-W)=\det\left(
\left[\begin{array}{cc}
 \tau  &  -\|A^{-1}\| \\
   0   &  \tau-(\lambda\|E\|+\|I-\lambda EN_A\|+\lambda \|A^{-1}\|\|EM_A\|)
\end{array}\right]\right)=0,
\end{eqnarray*}
from which we have
$$\tau^2-\tau(\lambda\|E\|+\|I-\lambda EN_A\|+\lambda \|A^{-1}\|\|EM_A\|)=0.$$
It follows from Lemma~\ref{lem:2.1}  that $|\tau|<1$ (i.e., $\rho(W)<1$) if and only if
$$\lambda\|E\|+\|I-\lambda EN_A\|+\lambda \|A^{-1}\|\|EM_A\|<1,$$
which is~\eqref{eq:condition1}.

Finally, if \eqref{eq:condition1} holds, we can prove the unique solvability of AVE~\eqref{eq:ave}. In contrast, suppose that~$x^{*}$ and~$\bar{x}^{*}$ are two different solutions of the AVE~\eqref{eq:ave}, then we have
\begin{numcases}{}
\|y^{*}-\bar{y}^{*}\| \leq \|A^{-1}\|\|x^{*}-\bar{x}^{*}\|,\label{eq:xbxa}\\
\|x^{*}-\bar{x}^{*}\| \leq (\|I-\lambda EN_A\|+\|\lambda E\|) \|x^{*}-\bar{x}^{*}\|+\|\lambda EM_A\|\|y^{*}-\bar{y}^{*}\|,\label{eq:xbxb}
\end{numcases}
where $y^{*}=x^{*} = A^{-1}(|x^*| + b)$ and $\bar{y}^{*}=\bar{x}^{*} = A^{-1}(|\bar{x}^*| + b)$. Note that it can be deduced from~\eqref{eq:condition1} that~$\|I-\lambda EN_A\|+\|\lambda E\| < 1$, it follows from~\eqref{eq:xbxb} that
\begin{equation}\label{eq:xy}
\|x^{*}-\bar{x}^{*}\| \leq \frac{\|\lambda EM_A\|}{1-\|I-\lambda EN_A\|-\|\lambda E\|} \|y^{*}-\bar{y}^{*}\|.
\end{equation}
From~\eqref{eq:condition1}, we  also have
\begin{equation}\label{eq:condition1.1}
\frac{\|A^{-1}\|\|\lambda EM_A\|}{1-\|I-\lambda EN_A\|-\|\lambda E\|}<1.
\end{equation}
It follows from~\eqref{eq:xbxa}, \eqref{eq:xy} and \eqref{eq:condition1.1} that
\begin{align*}
\|y^{*}-\bar{y}^{*}\| &\leq \|A^{-1}\|\|x^{*}-\bar{x}^{*}\|\nonumber\\
                  &\leq \frac{\|A^{-1}\|\|\lambda EM_A\|}{1-\|I-\lambda EN_A\|-\|\lambda E\|} \|y^{*}-\bar{y}^{*}\|\nonumber\\
                  &< \|y^{*}-\bar{y}^{*}\|,
\end{align*}
which will lead to a contradiction whenever~$y^{*}\neq\bar{y}^{*}$ (since~$x^{*}\neq\bar{x}^{*}$). Hence, we have~$x^{*}=\bar{x}^{*}$. The proof is completed.
\end{proof}

\begin{remark}\label{rem:alg3}
If $n = 1$, it can be checked that \eqref{eq:condition1} is the same with \eqref{ie:condition1}. However, when $n \ge 2$, the following examples shows that \eqref{eq:condition1} does not imply \eqref{ie:condition1} and vice versa. Hence, when $n\ge 2$, \eqref{eq:condition1} can not theoretically guarantee the convergence of Algorithm \ref{alg:s1} as well as \eqref{ie:condition1} can not theoretically guarantee the convergence of Algorithm \ref{alg:s3}.

For
\begin{equation*}
A=\begin{bmatrix}
        6 & -0.5\\ -0.9 & 1.5
        \end{bmatrix},
\end{equation*}
$\lambda = 0.9$ and $E = D_A^{-1}$, we have $\rho(R^{-1}S) \approx 0.8291$ and $ \lambda\|E\|+\|I-\lambda EN_A\|+\lambda \|A^{-1}\|\|EM_A\| \approx 1.5254.$ For
\begin{equation*}
A=\begin{bmatrix}
        5 & -3.6\\ -1.1 & 8
        \end{bmatrix},
\end{equation*}
$\lambda = 0.9$ and $E = D_A^{-1}$, we have
$\rho(R^{-1}S) \approx 1.0318$ and $\lambda\|E\|+\|I-\lambda EN_A\|+\lambda \|A^{-1}\|\|EM_A\| \approx 0.9754.$
\end{remark}

For  Algorithm~\ref{alg:s4}, we have the following convergence theorem.

\begin{theorem}\label{thm:2}
Let $A=N_A-M_A$ with $A$ and $N_A$ being nonsingular. If there exist $\lambda >0$ and a positive diagonal matrix $E$ such that $D_A^{-1}M_A + \lambda E A$ is nonsingular and
\begin{equation}\label{eq:condition2}
\|\lambda E\|+\|I-\lambda E A-D_A^{-1} M_A\|+\|D_A^{-1} M_A\|\|A^{-1}\|<1,
\end{equation}
then the AVE~\eqref{eq:ave} has a unique solution~$x^*$ for any $b\in \mathbb{R}^n$ and the sequence~$\{(x^{(k)},y^{(k)})\}^\infty_{k=0}$ generated by~\eqref{eq:alg2} converges to~$(x^{*}, y^{*}=A^{-1}(\vert x^{*}\vert+b))$.
\end{theorem}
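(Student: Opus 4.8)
The plan is to mirror the proof of Theorem~\ref{thm:1} almost verbatim, adapting it to the iteration~\eqref{eq:alg2}. First I would write down the shifted recursion: replacing $k$ by $k-1$ in~\eqref{eq:alg2} gives the analogue of~\eqref{eq:meth1'}, namely $y^{(k)}=A^{-1}(|x^{(k-1)}|+b)$ together with the corresponding update for $x^{(k)}$. Subtracting the shifted system from~\eqref{eq:alg2} yields the error recurrences
\begin{equation*}
y^{(k+1)}-y^{(k)}=A^{-1}(|x^{(k)}|-|x^{(k-1)}|),
\end{equation*}
and, after collecting terms,
\begin{equation*}
x^{(k+1)}-x^{(k)}=(I-\lambda EA-D_A^{-1}M_A)(x^{(k)}-x^{(k-1)})+D_A^{-1}M_A(y^{(k+1)}-y^{(k)})+\lambda E(|x^{(k)}|-|x^{(k-1)}|).
\end{equation*}
Taking $2$-norms and using $\||x|-|y|\|\le\|x-y\|$ produces the componentwise inequalities governing $\|y^{(k+1)}-y^{(k)}\|$ and $\|x^{(k+1)}-x^{(k)}\|$; here the coefficient of $\|x^{(k)}-x^{(k-1)}\|$ in the $x$-inequality becomes $\|I-\lambda EA-D_A^{-1}M_A\|+\|\lambda E\|$, and the cross term carries the factor $\|D_A^{-1}M_A\|$.

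Next I would assemble these into the same lower-triangular/upper-triangular $2\times2$ matrix inequality as in~\eqref{eq:1*}, multiply on the left by the nonnegative matrix $P=\bigl[\begin{smallmatrix}1&0\\ \|D_A^{-1}M_A\|&1\end{smallmatrix}\bigr]$, and read off the iteration matrix
\begin{equation*}
W=\left[\begin{array}{cc}
0 & \|A^{-1}\|\\
0 & \|\lambda E\|+\|I-\lambda EA-D_A^{-1}M_A\|+\|D_A^{-1}M_A\|\|A^{-1}\|
\end{array}\right].
\end{equation*}
Its characteristic polynomial is $\tau^2-\tau\cdot w_{22}=0$ where $w_{22}$ is the lower-right entry, so by Lemma~\ref{lem:2.1} (with $p=w_{22}$, $q=0$) we get $\rho(W)<1$ if and only if $w_{22}<1$, which is precisely condition~\eqref{eq:condition2}. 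Assuming $\rho(W)<1$, the telescoping/Cauchy argument of Theorem~\ref{thm:1}, using Lemma~\ref{lem:2.3} and Lemma~\ref{lem:2.4}, shows $\{x^{(k)}\}$ and $\{y^{(k)}\}$ are Cauchy, hence convergent to some $x^*,y^*$. Passing to the limit in~\eqref{eq:alg2} gives $y^*=A^{-1}(|x^*|+b)$ and $D_A^{-1}M_A(y^*-x^*)-\lambda E(Ax^*-|x^*|-b)=0$; here is where the nonsingularity of $D_A^{-1}M_A+\lambda EA$ enters. Substituting $y^*=A^{-1}(|x^*|+b)$ into the second equation and rearranging, the residual $r^*:=Ax^*-|x^*|-b$ satisfies $(D_A^{-1}M_A A^{-1}+\lambda E)r^*=0$, i.e.\ $(D_A^{-1}M_A+\lambda EA)A^{-1}r^*=0$; nonsingularity then forces $r^*=0$, so $x^*$ solves the AVE and $y^*=x^*$.

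Finally, for uniqueness I would run the same contradiction argument as at the end of Theorem~\ref{thm:1}: supposing two solutions $x^*\neq\bar x^*$ with $y^*=x^*$ and $\bar y^*=\bar x^*$, the fixed-point versions of the two norm inequalities give $\|y^*-\bar y^*\|\le\|A^{-1}\|\|x^*-\bar x^*\|$ and, since~\eqref{eq:condition2} implies $\|I-\lambda EA-D_A^{-1}M_A\|+\|\lambda E\|<1$, a bound $\|x^*-\bar x^*\|\le\frac{\|D_A^{-1}M_A\|}{1-\|I-\lambda EA-D_A^{-1}M_A\|-\|\lambda E\|}\|y^*-\bar y^*\|$; combining these with the inequality $\frac{\|A^{-1}\|\|D_A^{-1}M_A\|}{1-\|I-\lambda EA-D_A^{-1}M_A\|-\|\lambda E\|}<1$ extracted from~\eqref{eq:condition2} yields $\|y^*-\bar y^*\|<\|y^*-\bar y^*\|$, a contradiction. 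The one genuinely new ingredient compared with Theorem~\ref{thm:1} is the limit step: in Theorem~\ref{thm:1} nonsingularity of $N_A$ cleanly decouples the fixed-point equations, whereas here the two limiting equations are coupled through $D_A^{-1}M_A$, so the main obstacle I expect is verifying carefully that the hypothesis ``$D_A^{-1}M_A+\lambda EA$ nonsingular'' is exactly what is needed to conclude $r^*=0$ rather than merely $y^*=x^*$. Everything else is a routine transcription of the earlier proof with $N_A$ replaced by the appropriate combinations.
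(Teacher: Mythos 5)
Your proposal is correct and follows exactly the route the paper intends: the paper omits this proof as ``similar to that of Theorem~\ref{thm:1},'' and your transcription — the error recurrences, the $2\times 2$ matrix $W$ whose $(2,2)$ entry is the left-hand side of~\eqref{eq:condition2}, the Cauchy-sequence argument, and the uniqueness contradiction — is the faithful adaptation. Your limit step is also right: writing $r^*=Ax^*-|x^*|-b$ and factoring $(D_A^{-1}M_A+\lambda EA)A^{-1}r^*=0$ correctly identifies the nonsingularity of $D_A^{-1}M_A+\lambda EA$ as the hypothesis playing the role that nonsingularity of $N_A$ plays in Theorem~\ref{thm:1}.
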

\begin{proof}
The proof is similar to that of Theorem~\ref{thm:1} and hence we omit it here to save space.
\end{proof}

\begin{remark}\label{rem:alg4}
When $n = 1$, it can be proven that  \eqref{eq:condition2} implies \eqref{ie:condition2} while the reverse is not true. For instance, let $\lambda E = 1$ and $A = \frac{3}{2}$. When $n \ge 2$, the following examples shows that \eqref{eq:condition2} does not imply \eqref{ie:condition2} and vice versa. Hence, when $n\ge 2$, \eqref{eq:condition2} can not theoretically guarantee the convergence of Algorithm \ref{alg:s2} as well as \eqref{ie:condition2} can not theoretically guarantee the convergence of Algorithm \ref{alg:s4}.

For
\begin{equation*}
A= {\rm tridiag} (-3,9,-3)\in\mathbb{R}^{n \times n}\quad \text{with}\quad n=500,
\end{equation*}
$\lambda = 0.9$ and $E = D_A^{-1}$, we have $\rho(G^{-1}J)\approx 1.2258$ and $\|\lambda E\|+\|I-\lambda E A-D_A^{-1} M_A\|+\|D_A^{-1} M_A\|\|A^{-1}\|\approx 0.9964.$ For
\begin{equation*}
A=\begin{bmatrix}
        5 & -3\\ -3 & 7
        \end{bmatrix},
\end{equation*}
$\lambda = 0.9$ and $E = D_A^{-1}$, we have $\rho(G^{-1}J)\approx 0.4600$ and $\|\lambda E\|+\|I-\lambda E A-D_A^{-1} M_A\|+\|D_A^{-1} M_A\|\|A^{-1}\|\approx 1.0318.$
\end{remark}

\section{The optimal iteration parameters for Algorithm~\ref{alg:s3} and Algorithm~\ref{alg:s4}}\label{sec:parameter}
The performance of Algorithm~\ref{alg:s3} and  Algorithm~\ref{alg:s4} are dependent on the choices of $\lambda$ and $E$. It is an important problem to determine the optimal choices of $\lambda$ and $E$. By an optimal choice, we mean it is the choice such that the corresponding method gets the fastest convergence rate. Usually, it seems not to be an easy task to find the optimal choices of $\lambda$ and $E$ for Algorithm~\ref{alg:s3} and  Algorithm~\ref{alg:s4}, while it remains significance to find a somewhat optimal one. Hence, in this section, we will determine the optimal values of $\lambda$ for Algorithm~\ref{alg:s3} and  Algorithm~\ref{alg:s4} for certain $E$.

\subsection{The optimal iteration parameter for Algorithm~\ref{alg:s3}}
We first consider the optimal iteration parameter of Algorithm~\ref{alg:s3}. It follows from the proof of Theorem~\ref{thm:1} that the smaller value of~$\rho(W)$ is, the faster the Algorithm~\ref{alg:s3} will converge later on. Let
\begin{equation*}
g(\lambda,E)=\lambda\|E\|+\|I-\lambda EN_A\|+\lambda \|EM_A\|\|A^{-1}\|.
\end{equation*}
Since $W\ge 0$, the question turns to find $\lambda$ and $E$ such that $g(\lambda,E)$ is minimized. However, this is general not an easy task since $\lambda$ and $E$ are coupled with each other. In the following, by freezing $E$, we discuss the optimal value of $\lambda$.

\begin{theorem}\label{thm:alg3}
Let $A$ be a nonsingular matrix and $A = N_A - M_A$ with $N_A$ being nonsingular.
If~$E=N_A^{-1}$ and $\lambda >0$ is the parameter such that \eqref{eq:condition1} holds, then the optimal iteration parameter which minimizes~$\rho(W)$ is~$\lambda=1$.
\end{theorem}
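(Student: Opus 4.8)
The plan is to reduce the minimization of $\rho(W)$ to a scalar, piecewise-linear optimization problem and then solve it directly. First I would observe, exactly as in the eigenvalue computation closing the proof of Theorem~\ref{thm:1}, that $W$ is upper triangular with diagonal entries $0$ and $g(\lambda,E)$; since $g(\lambda,E)\ge 0$, its spectral radius is $\rho(W)=g(\lambda,E)$. Hence minimizing $\rho(W)$ over the admissible $\lambda$ is precisely minimizing $g(\lambda,E)$, and with $E$ frozen the problem becomes one-dimensional in $\lambda$.

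The decisive simplification comes from the choice $E=N_A^{-1}$. Then $EN_A=N_A^{-1}N_A=I$, so $I-\lambda EN_A=(1-\lambda)I$ and therefore $\|I-\lambda EN_A\|=|1-\lambda|$. Setting $c=\|N_A^{-1}\|+\|N_A^{-1}M_A\|\|A^{-1}\|\ge 0$, the objective collapses to the piecewise-linear function
\[
g(\lambda)=c\lambda+|1-\lambda|,\qquad \lambda>0,
\]
whose minimizer over $\lambda>0$ I would then find by a direct case split.

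On $(1,\infty)$ we have $g(\lambda)=(c+1)\lambda-1$, which is strictly increasing because $c+1>0$; on $(0,1]$ we have $g(\lambda)=(c-1)\lambda+1$, whose slope $c-1$ I control using the standing hypothesis that \eqref{eq:condition1} holds for some $\lambda>0$. Indeed, if $c\ge 1$ then $g\ge 1$ on all of $(0,\infty)$, so \eqref{eq:condition1} (which asks $g<1$) would be infeasible; thus feasibility forces $c<1$. With $c<1$ the function $g$ is strictly decreasing on $(0,1]$ and strictly increasing on $(1,\infty)$, so its unique minimizer is $\lambda=1$, where $g(1)=c<1$ (confirming that $\lambda=1$ is itself admissible). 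This yields the claimed optimal parameter.

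I do not anticipate a genuinely hard step; the one place that needs care is tying the feasibility of \eqref{eq:condition1} to the strict inequality $c<1$, since this is exactly what excludes the degenerate cases $c=1$ (where $g\equiv 1$ on $(0,1]$) and $c>1$ (where $g>1$ throughout), and thereby certifies that the interior point $\lambda=1$ is simultaneously feasible and globally optimal.
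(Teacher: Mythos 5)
Your proposal is correct and follows essentially the same route as the paper: the choice $E=N_A^{-1}$ reduces $\rho(W)=g(\lambda,E)$ to the piecewise-linear function $|1-\lambda|+c\lambda$ with $c=\|N_A^{-1}\|+\|N_A^{-1}M_A\|\,\|A^{-1}\|$, and feasibility of \eqref{eq:condition1} forces $c<1$, so $g$ decreases on $(0,1]$ and increases on $[1,\infty)$. Your infeasibility argument for $c<1$ (if $c\ge 1$ then $g\ge 1$ everywhere) is a slightly cleaner packaging of the paper's sign analysis of the slopes $\mu_1=c-1$ and $\mu_2=c+1$, but it is the same idea.
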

\begin{proof}
When~$E=N_A^{-1}$, we have
\begin{align}\nonumber
g(\lambda,E)&=|1-\lambda|+\lambda \|N_A^{-1}\|+\lambda \|N_A^{-1}(N_A-A)\|\|A^{-1}\|\\\nonumber
              &=|1-\lambda|+\lambda \|N_A^{-1}\|+\lambda \|I-N_A^{-1}A\|\|A^{-1}\|.
\end{align}

When~$0<\lambda \leq 1$, let~$\mu_1=\|N_A^{-1}\|+\|I-N_A^{-1}A\|\|A^{-1}\|-1 $, then we have
\begin{align*}
g(\lambda,E)&=1-\lambda+\lambda \|N_A^{-1}\|+\lambda \|I-N_A^{-1}A\|\|A^{-1}\|, \nonumber\\
&=1+\lambda(\|N_A^{-1}\|+ \|I-N_A^{-1}A\|\|A^{-1}\|-1)\nonumber\\
&=1+\lambda \mu_1.
\end{align*}
It is easy to calculate that~$\frac{\partial g}{\partial \lambda}=\mu_1.$ In addition, it follows from \eqref{eq:condition1} that $1+\lambda \mu_1<1$, which combines $\lambda>0$ implies $\mu_1<0$. Hence, $g(\cdot, E)$ obtains its minimum value at $\lambda = 1$ whenever $\lambda \in (0,1]$.

When~$\lambda\ge 1$, let~$\mu_2=\|N_A^{-1}\|+\|I-N_A^{-1}A\|\|A^{-1}\|+1\in (1,2)$, then we have
$$g(\lambda,E)=\lambda \mu_2-1,$$
from which $\frac{\partial g}{\partial \lambda}=\mu_2 >1$. Hence, $g(\cdot, E)$ obtains its minimum value at $\lambda = 1$ whenever $\lambda \in \left[1, \frac{2}{\mu_2}\right)$.
\end{proof}

\subsection{The optimal iteration parameter for Algorithm \ref{alg:s4}}
Next, we discuss the optimal iteration parameter of Algorithm~\ref{alg:s4}. Let
\begin{equation*}
g(\lambda,E)=\|I-\lambda EA-D_A^{-1}M_A\|+\|\lambda E\|+\|D_A^{-1}M_A\| \|A^{-1}\|,
\end{equation*}
by fixing $E$, the goal is to minimize $g$ with respect to $\lambda$.

\begin{theorem}\label{thm:alg4}
Let~$E=D_A^{-1}$ and $N_A=D_A$ (i.e. $U_A = 0$). If $\lambda >0$ is the parameter such that the conditions of Theorem~\ref{thm:2} hold, then the optimal iteration parameter for Algorithm~\ref{alg:s4} is~$\lambda=1$.
\end{theorem}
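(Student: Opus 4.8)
The plan is to follow the template of the proof of Theorem~\ref{thm:alg3}: substitute the prescribed $E=D_A^{-1}$ and $N_A=D_A$ into $g(\lambda,E)$, reduce it to an explicit piecewise-linear function of $\lambda$ whose only kink is at $\lambda=1$, and then compare the slopes on the two pieces $(0,1]$ and $[1,\infty)$. Since $N_A=D_A$ forces $U_A=0$, the matrix $A$ is lower triangular and $M_A=D_A-A$, so $D_A^{-1}M_A=I-D_A^{-1}A$. A direct computation then gives $I-\lambda EA-D_A^{-1}M_A=(1-\lambda)D_A^{-1}A$, whence
\begin{equation*}
g(\lambda,E)=|1-\lambda|\,\|D_A^{-1}A\|+\lambda\|D_A^{-1}\|+\|D_A^{-1}M_A\|\,\|A^{-1}\|.
\end{equation*}
Abbreviating $\alpha=\|D_A^{-1}A\|$, $\beta=\|D_A^{-1}\|$ and $c=\|D_A^{-1}M_A\|\,\|A^{-1}\|\ge 0$, this reads $g=\alpha|1-\lambda|+\beta\lambda+c$.

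The crucial preliminary observation, which plays the role of the constant $1$ in the proof of Theorem~\ref{thm:alg3}, is that $\alpha\ge 1$. Indeed $D_A^{-1}A=I-D_A^{-1}L_A$ is lower triangular with every diagonal entry equal to $1$, so all its eigenvalues equal $1$ and $\rho(D_A^{-1}A)=1$; since the $2$-norm dominates the spectral radius, $\alpha=\|D_A^{-1}A\|\ge 1$, and therefore $\alpha+c\ge 1$.

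I would then split into the two linear pieces. For $\lambda\ge 1$ one has $g(\lambda)=(c-\alpha)+(\alpha+\beta)\lambda$ with slope $\alpha+\beta>0$, so $g$ is strictly increasing and minimized at $\lambda=1$ on this range. For $0<\lambda\le 1$ one has $g(\lambda)=(\alpha+c)+(\beta-\alpha)\lambda$; here I invoke the feasibility hypothesis \eqref{eq:condition2}, namely $g(\lambda)<1\le\alpha+c$, which together with $\lambda>0$ forces the slope $\beta-\alpha<0$. Hence $g$ is strictly decreasing on $(0,1]$ and again minimized at $\lambda=1$. Combining the two pieces shows $\lambda=1$ minimizes $g(\lambda,E)$, and hence $\rho(W)$, over all admissible $\lambda$, which is the assertion.

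The main obstacle is precisely the inequality $\alpha\ge 1$: without it the convergence condition $g<1$ cannot be used to sign the slope $\beta-\alpha$ on $(0,1]$ (and the conclusion can genuinely fail), so the triangular structure imposed by $N_A=D_A$ is doing essential work. A minor point worth addressing in the write-up is that $\lambda=1$ is itself admissible; this is automatic, since $g(1)=\beta+c$, and if the prescribed feasible parameter lies in $(1,\infty)$ the monotonicity on $[1,\infty)$ already yields $g(1)<1$, while if it lies in $(0,1]$ the decreasing slope gives the same.
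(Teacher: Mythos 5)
Your proposal is correct, and its overall skeleton coincides with the paper's: reduce $g(\lambda,E)$ to the piecewise-linear form $\alpha|1-\lambda|+\beta\lambda+c$ with $\alpha=\|D_A^{-1}A\|$, $\beta=\|D_A^{-1}\|$, $c=\|D_A^{-1}M_A\|\|A^{-1}\|$, note the slope $\alpha+\beta>0$ on $[1,\infty)$, and show the slope $\beta-\alpha$ on $(0,1]$ is negative. The one place where you genuinely diverge is in how that last sign is obtained. The paper argues by contradiction: assuming $\beta\ge\alpha$ it deduces $\|A^{-1}\|\ge 1$ from $\|D_A^{-1}\|\|A^{-1}\|\ge\|D_A^{-1}A\|\|A^{-1}\|\ge\|D_A^{-1}AA^{-1}\|$, and then lower-bounds the whole expression in \eqref{eq:condition2} by $1$ via the reverse triangle inequality, contradicting feasibility. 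You instead observe directly that $D_A^{-1}A=I-D_A^{-1}L_A$ is unit lower triangular, so $\rho(D_A^{-1}A)=1$ and hence $\alpha\ge 1$, after which $g(\lambda)<1\le\alpha+c$ immediately forces $\beta-\alpha<0$. Your route is shorter and isolates more clearly why the hypothesis $N_A=D_A$ (triangularity of $A$) is essential; the paper's route avoids any spectral argument but needs the extra contradiction step. You also explicitly verify that $\lambda=1$ is itself feasible (covering the case where the given feasible parameter exceeds $1$), a point the paper handles only implicitly through its statement that $\frac{1-\beta_2}{\alpha_2}>1$. Both arguments are sound.
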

\begin{proof}
Based on the assumptions, we have
\begin{equation}\label{eq:g1'}
g(\lambda,E)
=|1-\lambda|\|D_A^{-1}A\|+\lambda\|D_A^{-1}\|+\|D_A^{-1}L_A\|\|A^{-1}\|.
\end{equation}

When $0<\lambda \leq 1$, denote $\alpha_1 =\|D_A^{-1}\|-\|D_A^{-1}A\|, \beta_1=\|D_A^{-1}A\|+\|D_A^{-1}L_A\|\|A^{-1}\|$, it follows from \eqref{eq:g1'} that
\begin{align*}
 g(\lambda,E)&= \|D_A^{-1}A\|+\|D_A^{-1}L_A\|\|A^{-1}\| + \lambda(\|D_A^{-1}\|-\|D_A^{-1}A\|) \nonumber\\
               &=\alpha_1\lambda+\beta_1.
\end{align*}
It is easy to calculate that $\frac{\partial g}{\partial \lambda}=\alpha_1.$ In the following, we argue that $\alpha_1< 0$. Otherwise, if $\alpha_1\ge 0$, then $\|D_A^{-1}\|\ge \|D_A^{-1}A\|$, from which we have
$$
\|D_A^{-1}\|\|A^{-1}\|\ge \|D_A^{-1}A\|\|A^{-1}\| \geq\|D_A^{-1}AA^{-1}\|
=\|D_A^{-1}\|.
$$
Thus we get $\|A^{-1}\|\ge 1$. Then
\begin{align*}
\|I-\lambda EA-D_A^{-1}M_A\|&+\|\lambda E\|+\|D_A^{-1}M_A\|\|A^{-1}\|\\
& \ge 1- \lambda\|D_A^{-1}A\| - \|D_A^{-1}L_A\| + \lambda \|D_A^{-1}\| + \|D_A^{-1}L_A\| \\
&\ge 1 -  \lambda\|D_A^{-1}\|+ \lambda \|D_A^{-1}\|\\
&= 1,
\end{align*}
which contradicts with \eqref{eq:condition2}. Hence, $\frac{\partial g}{\partial \lambda} < 0$ when $\lambda\in \left(\max\left\{0, \frac{1-\beta_1}{\alpha_1}\right\},1\right]$. Namely, $g$ is strictly monotonously decreasing in terms of $\lambda$ in $\left(\max\left\{0, \frac{1-\beta_1}{\alpha_1}\right\},1\right]$ and the optimal iteration parameter is~$\lambda=1$.

When~$\lambda \ge 1$, denote $\alpha_2 =\|D_A^{-1}\|+\|D_A^{-1}A\|, \beta_2=\|D_A^{-1}L_A\|\|A^{-1}\| -\|D_A^{-1}A\|$, it follows from $\lambda\ge 1$ and \eqref{eq:condition2} that $\frac{1-\beta_2}{\alpha_2} >1$. By~\eqref{eq:g1'}, we have
\begin{align*}
 g(\lambda,E)&= \|D_A^{-1}L_A\|\|A^{-1}\| - \|D_A^{-1}A\|+\lambda(\|D_A^{-1}\|+\|D_A^{-1}A\|) \nonumber\\
               &=\alpha_2\lambda+\beta_2,
\end{align*}
then $\frac{\partial g}{\partial \lambda} = \alpha_2 >0.$ That is, $g$ is strictly monotonously increasing with respect to $\lambda$ in $\left[1, \frac{1-\beta_2}{\alpha_2}\right)$ and the optimal iteration parameter is $\lambda=1$.

To sum up, the optimal iteration parameter is~$\lambda=1$.
\end{proof}

\section{Numerical results}\label{sec:Numerical}
In this section, we will present two numerical examples to demonstrate our claims in the previous sections. For algorithms will be tested, i.e., Algorithms~\ref{alg:s1}--\ref{alg:s4}. In the numerical results, we will report ``IT'' (the number of iterations), ``Time'' (the elapsed CPU time in seconds) and ``RES'' which is defined by
$$
{\rm RES} =\frac{\|b+|x^{(k)}|-Ax^{(k)}\|}{\|b\|}.
$$
All tests are started from initial zero vector and terminated if the current iteration satisfies ${\rm RES} \le 10^{-6}$ or the number of prescribed maximal iteration step $k_{\max} = 500$ is exceeded. In our computations, all runs are implemented in MATLAB (version 9.10 (R2021a)) on a personal computer with IntelCore(TM) i7 CPU 2.60 GHz, 16.0GB memory.

\begin{example}\label{exam:1}
Suppose that $A=M+\mu I\in\mathbb{R}^{n\times n}$ and $b = Ax^*-|x^*|\in\mathbb{R}^n$ with~$M={\rm tridiag}(-1.5I,S,-0.5I)\in\mathbb{R}^{n\times n}$ and $x^*=(-1,1,-1,1,\cdots, -1,1)^\top\in\mathbb{R}^n$, where~$S={\rm tridiag}(-1.5,8,-0.5)\in\mathbb{R}^{m\times m}$.

In this example, we first compare the performances of Algorithms~\ref{alg:s1}--\ref{alg:s4}. Numerical results are report in Table~\ref{table1} and Figure~\ref{fig1}, from which we can find that Algorithm~\ref{alg:s1} and Algorithm~\ref{alg:s3} have different numerical performances, and the same goes to Algorithm~\ref{alg:s2} and Algorithm~\ref{alg:s4}. Next, we will test the optimal iteration parameter of Algorithm~\ref{alg:s3}. For this purpose, we fix $E = N_A^{-1}$ and numerical results are shown in Figure~\ref{alg3opt}, which intuitively confirms our conclusion in Theorem~\ref{thm:alg3}.

\begin{table}[h]
\centering
\caption{Numerical results for Examples~\ref{exam:1} with $E = D_A^{-1}$, $\lambda=0.5$ and $\mu=4$.}\label{table1}
\begin{tabular}{lllllllll}\hline
&n=400 &Algorithm~\ref{alg:s1}    &Algorithm~\ref{alg:s2}    &Algorithm~\ref{alg:s3}    & Algorithm~\ref{alg:s4}\\\cline{3-6}
&IT       &$26$            &$23$            &$23$             &$29$\\\cline{3-6}
&Time    &$0.0901 $      &$0.0271$       &$0.0445 $       &$0.0581$\\\cline{3-6}
&RES     &$8.3447\times 10^{-7}$  &$6.5155\times 10^{-7}$  &$6.3796 \times 10^{-7}$  &$7.0597\times 10^{-7}$ \\\hline
\end{tabular}
\end{table}

\begin{figure}[h]
\centering  
\subfigure{
\includegraphics[width=0.45\linewidth]{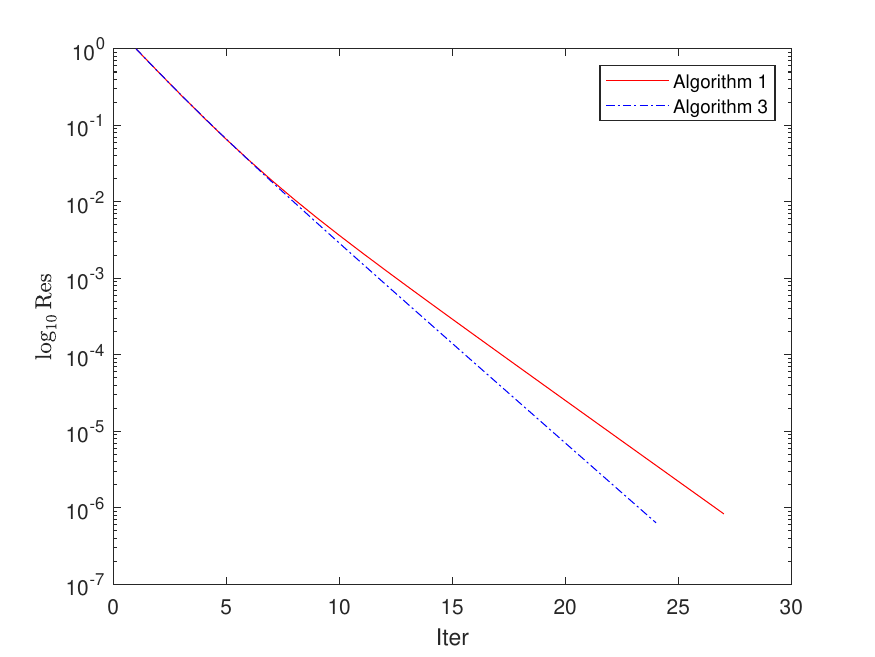}}
\subfigure{
\includegraphics[width=0.45\linewidth]{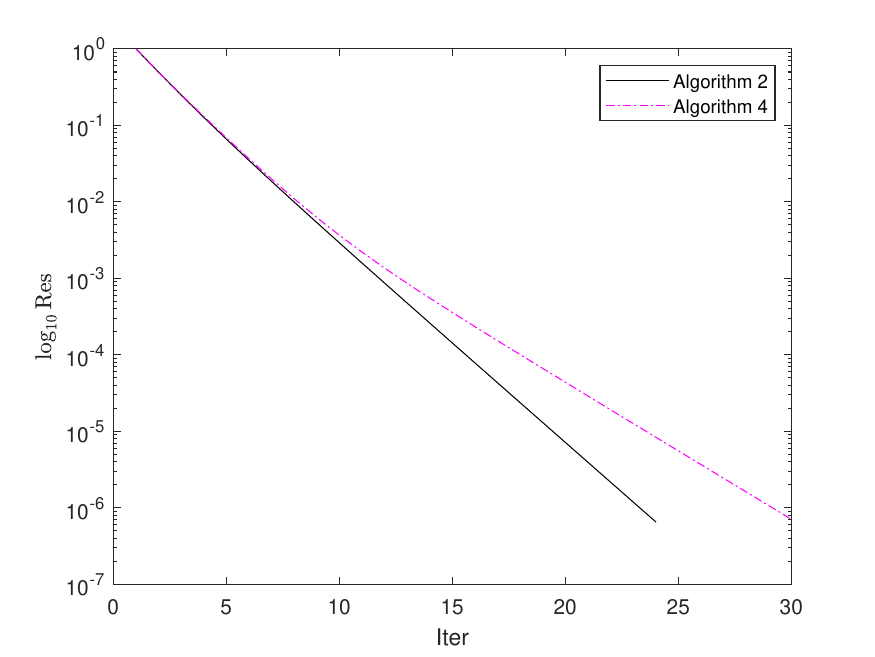}}
\caption{Iterative curves for Example~\ref{exam:1} with~$\lambda=0.5$ and~$\mu=4$.}
\label{fig1}
\end{figure}

\begin{figure}[h]
  \centering
  \includegraphics[width=0.7\linewidth]{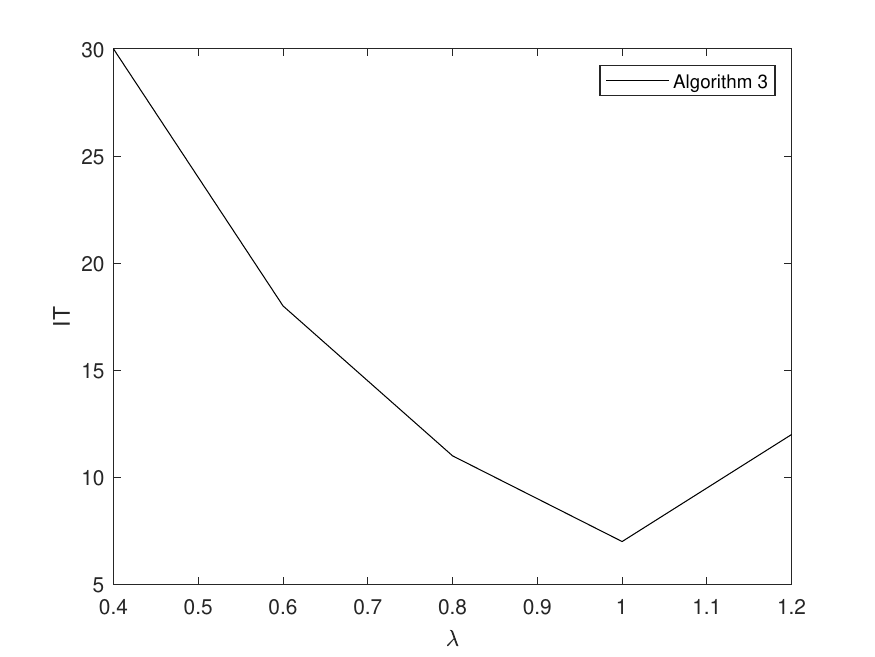}
  \caption{The trend diagram of IT versus $\lambda$ for Algorithm~\ref{alg:s3} in Example~\ref{exam:1}.}\label{alg3opt}
\end{figure}
\end{example}

In the analysis of Theorem~\ref{thm:alg4}, $A$ is a lower triangular matrix. We develop the following example to verify the result in Theorem~\ref{thm:alg4}. Since lower triangular matrix is also applicable to the analysis of Theorem~\ref{thm:alg3}, Algorithm~\ref{alg:s3} is also tested.

\begin{example}\label{exam:2}
Suppose the matrix~$A\in\mathbb{R}^{n\times n}$ is presented by
\begin{eqnarray*}
A=
\begin{cases}
S,\quad \text{if} \quad j=i;\\
-I,\quad \text{if}\quad j=i-1 (i=2,\cdots,n);\\
0, \quad \text{otherwise},
\end{cases}
\end{eqnarray*}
where~$S={\rm tridiag}(-1,\theta,0)\in\mathbb{R}^{m\times m}$, $I\in\mathbb{R}^{m\times m}$ represents the unit matrix. Let~$b=Ax^*-|x^*|\in\mathbb{R}^n$ with $x^*=(-1,1,-1,1,\cdots, -1,1)^\top\in\mathbb{R}^n$. The trend diagrams of IT versus $\lambda$ for Algorithms~\ref{alg:s3} and~\ref{alg:s4} are reported in Figure~\ref{alg34opt}, from which we can find that the results in Section~\ref{sec:parameter} are verified.

\begin{figure}[h]
  \centering
  \includegraphics[width=0.7\linewidth]{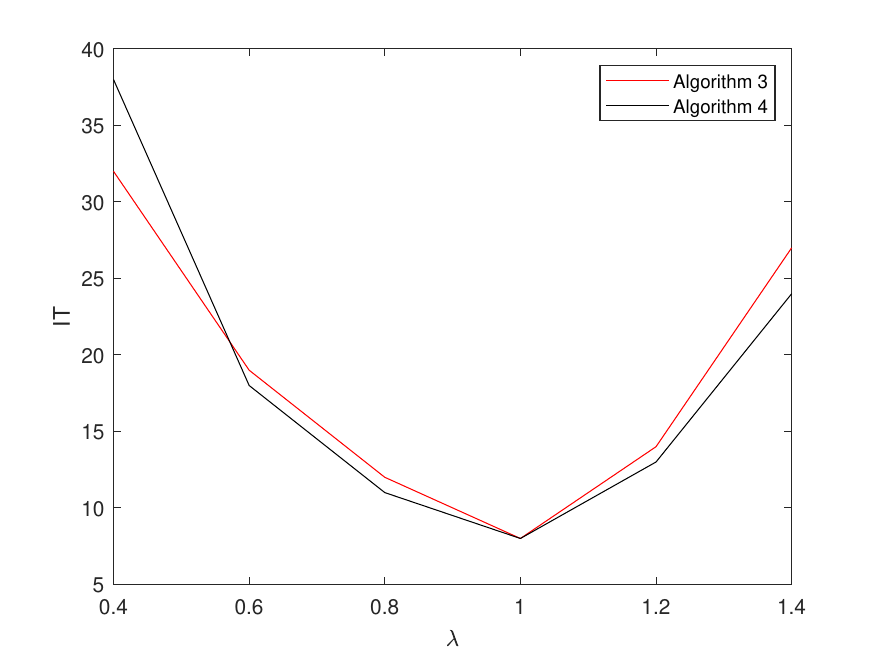}
  \caption{The trend diagrams of IT versus $\lambda$ for Algorithms~\ref{alg:s3} and~\ref{alg:s4} in Example~\ref{exam:2}.}\label{alg34opt}
\end{figure}
\end{example}

\section{Conclusions}\label{sec:Conclusions}
In this paper, we establish the convergence results of the two iterative methods proposed in~\cite{alpa2023}, and discuss the optimal iteration parameters under certain special cases.
Two numerical examples are given to demonstrate our claims. How to determine the optimal iteration parameters of the algorithms in a more general case is worthy of further study.

%

\section*{Disclosure statement}

No potential conflict of interest was reported by the authors.

\section*{Funding}

Cairong Chen was supported by the Natural Science Foundation of Fujian Province (2021J01661).

%


\end{document}